\renewcommand{\arraystretch}{1.3}
\theoremstyle{remark} 
\newtheorem{remark}{Remark}
\theoremstyle{example} 
\newtheorem{example}{Example}
\theoremstyle{proposition}
\newtheorem{proposition}{Proposition}
\title{Extending Hridaya Kolam to Multiple Loops: A Study of Non-Coprime Dot--Arm Structures}
\author[1]{Atanu Manna\thanks{Email: atanuiitkgp86@gmail.com}}
\author[2]{Suvra Kanti Chakraborty\thanks{Email: suvrakanti@vidyamandira.ac.in (Corresponding Author)}}
\affil[1]{\small Indian Institute of Carpet Technology, Chauri Road, Bhadohi--221401, Uttar Pradesh, India}
\affil[2]{\small Ramakrishna Mission Vidyamandira, Belur Math--711202, West Bengal, India}
\date{}
\begin{document}
\maketitle
\begin{abstract}
\noindent This paper extends Hridaya Kolam patterns to cases where the number of dots ($m$) and arms ($n$) are not coprime, i.e., $\gcd(m, n) \ne 1$. Such configurations give rise to multiple disjoint closed loops. We propose a modular-arithmetic-based algorithm to systematically generate such patterns, and illustrative patterns for various non-coprime $(m, n)$ pairs are provided to demonstrate the resulting multi-loop structures. 
These multi-loop Kolam designs can inspire architectural motifs and ornamental patterns in floor plans, facades, and decorative elements.\\\\
\noindent \textbf{Keywords}: Hridaya Kolam, Modular arithmetic, Algorithmic pattern generation.
\noindent \textbf{MSc Classification Code}: 00A66, 11A07, 68U05
\end{abstract}

\section{Introduction}

Kolam (or Kamalam) is a traditional Indian floor art of geometric and symmetrical patterns, drawn with rice flour or limestone powder, especially in South India. Known by different regional names—such as Muggulu, Alpana, Mandana, and Rangoli—these designs symbolize devotion, cultural identity, and aesthetic order. Their symbolic geometry and use of natural materials have drawn interdisciplinary interest, especially in anthropology and computational design.\\

\noindent A foundational study of Hridaya Kamalam Kolam featuring radial dot arrangements was conducted by Siromoney \cite{SIROMONEY78} and later by Siromoney and Chandrasekaran \cite{SIROCHANDRA}, focusing on cases with an odd number of dots per arm and coprime arm counts. Ascher's formal language approach \cite{ASCHER}, Naranan's Fibonacci-based constructions \cite{NARANAN, NARANAN2, NARANAN3, NARANAN4}, and the diagrammatic analyses by Yanagisawa and Nagata \cite{YANAGISAWA} have expanded the theoretical framework. Robinson’s ‘Pasting Scheme’ \cite{ROBINSON} and Nagata’s digitalization work \cite{NAGATA, NAGATA2} advanced computational modeling. Mathematical connections with graph theory and algebra have been explored by Thirumuthi and Simic-Muller \cite{THIRUMURTHY}, and Sarin \cite{SARIN}.\\

\noindent Recently in 2023, \noindent Srinivasan \cite{SRINIVASAN} has investigated the Hridaya Kolam design for an odd number of dots ($m$) plotted on various numbers of arms ($n$), under the condition that $\gcd(m, n) = 1$. However, his work does not provide a solution for even numbers of dots. To address this gap, in 2025, Chakraborty \& Manna \cite{CHAKMAN} have recently introduced a novel approach using modular arithmetic to generate Hridaya Kolam designs for even numbers of dots and studied the structural properties of these patterns. They have also proposed an application in the textile sector.\\

\noindent Thus far, all research in this direction has assumed that $\gcd(m, n) = 1$. 
However, the behavior of the Kolam patterns when $\gcd(m, n) \neq 1$ remains unexplored. 
Moreover, no prior study has addressed this case. 
This motivates us to explore possible answers to the following questions.

\medskip
\textsf{Q(a)}~Can we extend Hridaya Kolam patterns to the case when $\gcd(m, n) \neq 1$?

\medskip
If the answer is affirmative, then:

\medskip
\textsf{Q(b)}~What will be the impact on Hridaya Kolam patterns if $\gcd(m, n) \neq 1$?

\medskip
Moreover,

\medskip
\textsf{Q(c)}~Are there any additional assumptions needed, apart from $\gcd(m, n) \neq 1$?

\medskip
and finally,

\medskip
\textsf{Q(d)}~Can the modular arithmetic approach help us to generate the underlying sequences for these patterns?\\

\noindent Therefore, the main objective of this present paper is to address these questions. A preliminary investigation suggests that Hridaya Kolam patterns can indeed be extended to the case where $\gcd(m, n) \neq 1$, thus answering Q(a). We will observe that in this case, the nature of the Hridaya Kolam patterns differs significantly from those where $\gcd(m, n) = 1$, thereby answering Q(b).\\

\noindent When $\gcd(m, n) \neq 1$ and $m \mid n$, the resulting pattern degenerates into a trivial central structure without meaningful loops, and is therefore excluded from our primary discussion. Thus, we impose the condition $m \nmid n$, answering Q(c). We observe that in cases where $\gcd(m, n) \neq 1$ and $m \nmid n$, the classical Kolam pattern no longer results in a single Eulerian circuit. Instead, it yields multiple disjoint closed loops. This motivates the need to determine (i) the number of disjoint loops, (ii) a suitable generating sequence, and (iii) an algorithm to construct the complete pattern. We find that by modifying the modular arithmetic structure used in the $\gcd(m, n) = 1$ case (see also \cite{CHAKMAN}), it is possible to generate sequences for Hridaya Kolam patterns where $\gcd(m, n) \neq 1$, thereby addressing Q(d).

\section{Sequences Generated by Modular Arithmetic}

\noindent
In the extended framework of Hridaya Kolam patterns, certain parameter choices lead to the emergence of multiple disjoint paths. Specifically, when integers \( m \) and \( n \) satisfy \( \gcd(m,n) = d > 1 \) and \( m \nmid n \), the resulting diagram naturally fragments into \( d \) independent closed loops instead of forming a single continuous circuit. To describe and construct these loops systematically, we employ modular arithmetic sequences indexed by a parameter \( r \). Each such sequence corresponds to one of the individual cycles contributing to the overall Kolam.
\medskip

\noindent
Extending the modular techniques introduced by Chakraborty and Manna \cite{CHAKMAN}, where configurations with \( m > n \) were also considered—we outline a more general method for generating these multi-loop patterns. The key idea is to define arithmetic sequences that cycle through values modulo \( m \), with careful indexing to distinguish the separate loops.

\subsection{Defining generating sequences}

\noindent
Let \( m, n \in \mathbb{N} \) be such that \( \gcd(m,n) = d > 1 \) and \( m \nmid n \). For each residue class \( r \in \{0, 1, \dotsc, d-1\} \), we define the sequence $\{a_k^{(r)}\}$ as below:
\[
a_k^{(r)} \equiv (kn + r) \mod m, \quad \text{where } k \in \mathbb{N}_0.
\]
To ensure values of $\{a_k^{(r)}\}$ lie in \( \{1, 2, \dotsc, m\} \), we interpret `\( 0 \mod m \)' as \( m \), so that
\[
a_k^{(r)} := \begin{cases}
(kn + r) \mod m & \text{if } (kn + r) \not\equiv 0 \mod m, \\
m & \text{if } (kn + r) \equiv 0 \mod m.
\end{cases}
\]

\noindent
Let us define
\[
S^{(r)} = [a_0^{(r)}, a_1^{(r)}, \dotsc, a_t^{(r)}],
\]
where $a_{t+1}^{(r)} = a_0^{(r)}$ and each sequence \( S^{(r)}\) is terminated at the point where it cycles back to its initial value. The \( d \) number of sequences \( \{S^{(0)}, S^{(1)}, \dotsc, S^{(d-1)}\} \) collectively cover all \( m \) vertical dot levels in the pattern. In the following we have illustrated this idea by giving some examples.

\begin{example}
  Suppose that $(m,n) = (4,6)$. Then we have $\gcd(4,6) = 2$, which implies that $r \in \{0,1\}$. Hence by using the formula: $a_k^{(r)} = (kn + r) \mod m$, one obtains the following. Choose $k=0, 1, 2, 3$ then for $r=0$, we have
    \[
    \begin{aligned}
    & 0 \mod 4 = 0 \Rightarrow 4,\\
    & 6 \mod 4 = 2, \\
    &12 \mod 4 = 0 \Rightarrow 4,\\
    & 18 \mod 4 = 2.
    \end{aligned}
    \]
Therefore, the desired sequence is $(4, 2, 4, 2, \ldots)$.
Similarly, for $r=1$ and $k=0, 1, 2, 3$, one gets
    \[
    \begin{aligned}
    &1 \mod 4 \Rightarrow 1,\\
    &  7 \mod 4\Rightarrow 3, \\
    & 13 \mod 4 \Rightarrow 1,\\
    & 19 \mod 4 \Rightarrow 3.
    \end{aligned}
    \]
Hence, we get another generating sequence in this case as $(1, 3, 1, 3, \ldots)$.\\
The following figure demonstrates how these two sequences produces a Hridaya Kolam with two closed loops. The blue lines drawn by using the sequence $(4, 2, 4, 2, \ldots)$ whereas the sequence $(1, 3, 1, 3, \ldots)$ is used to draw sky-blue lines.
\begin{figure}[H]
\centering
\begin{minipage}{0.45\textwidth}
    \centering
    \includegraphics[width=\textwidth]{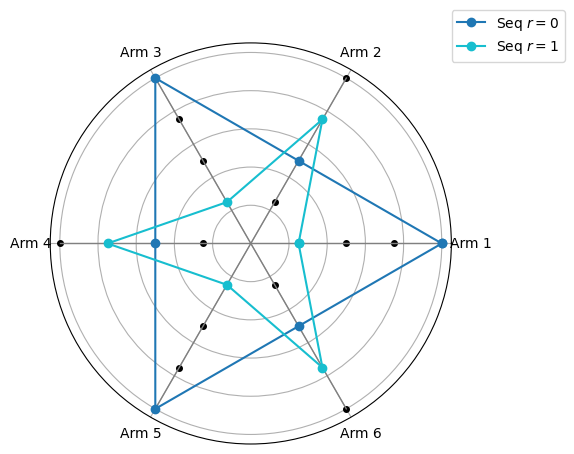}
    \caption*{Figure: \(m=4, n=6\)}
\end{minipage}
\end{figure}

\end{example}
   
\begin{example}
Now we consider $(m,n) = (6,4)$. Then again $\gcd (6, 4) = 2$ and $r\in \{0, 1\}$. Applying $a_k^{(r)} = (kn + r) \mod m$. Therefore, for $r=0$, we have the following
    \[
    \begin{aligned}
    &0 \mod 6 = 0 \Rightarrow 6,\\
    &4 \mod 6 = 4, \\
    & 8 \mod 6 = 2,\\
    & 12 \mod 6 = 0 \Rightarrow 6.
    \end{aligned}
    \]
which gives the sequence $(6, 4, 2, 6, \ldots)$. By choosing $r=1$, we get
    \[
    \begin{aligned}
    &1 \mod 6 \Rightarrow1,\\
   & 5 \mod 6 \Rightarrow 5, \\
    &9 \mod 6 \Rightarrow 3,\\
    & 13 \mod 6 \Rightarrow 1.
    \end{aligned}
    \]
which generates the sequence $(1, 5, 3, 1, \ldots)$.
\begin{figure}[H]
\centering
\begin{minipage}{0.45\textwidth}
    \centering
    \includegraphics[width=\textwidth]{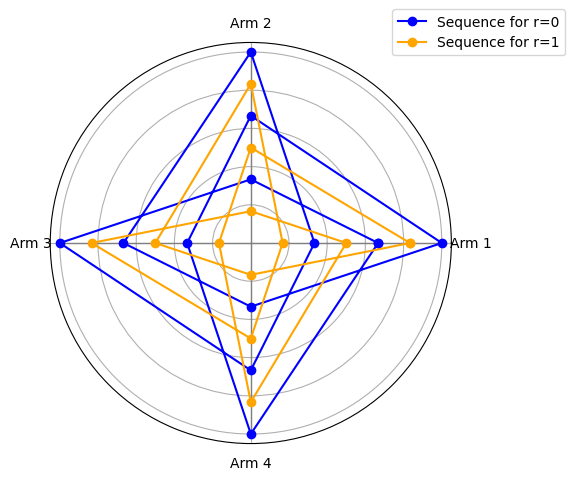}
    \caption*{Figure: \(m=6, n=4\)}
\end{minipage}
\end{figure}
The above figure demonstrates that how blue lines and orange lines form two different closed loops.
\end{example}

\subsection{Multi-Loop Kolam Structures via Modular Sequences}
\noindent
For positive integers \(m\) and \(n\), the integers modulo \(m\) can be partitioned into \(\gcd(m,n)\) distinct residue classes. The construction of multiple disjoint loops in a Kolam pattern then follows a simple rule, which is precisely given in the following proposition. Multi-loop structures arise whenever \(\gcd(m,n) > 1\).

\begin{proposition}
Let $m,n$ be positive integers and let $d=\gcd(m,n)>1$. 
For each residue $r\in\{0,1,\dots,d-1\}$ define the sequence
\[
a^{(r)}_k \equiv (k n + r) \mod m, \qquad k=0,1,2,\dots
\]
where residues modulo $m$ are represented by $\{0,1,\dots,m-1\}$. 
Then the $m$ integers $0,1,\dots,m-1$ are divided into exactly $d$ disjoint repeating sequences of this form. 
Hence, the number of disjoint loops produced by the construction equals $d=\gcd(m,n)$.
\end{proposition}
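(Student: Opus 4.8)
The plan is to recognize that the sequences $\{a^{(r)}_k\}$ are exactly the orbits of the map $x \mapsto x + n \pmod m$ acting on $\mathbb{Z}/m\mathbb{Z}$, and to count these orbits. First I would observe that the set $\{0, 1, \dots, m-1\}$, identified with $\mathbb{Z}/m\mathbb{Z}$, is partitioned into cosets of the cyclic subgroup $H = \langle n \rangle$ generated by $n$ under addition modulo $m$. The sequence $S^{(r)}$ starting at $r$ visits precisely the coset $r + H$, since $a^{(r)}_k = r + kn \bmod m$ runs through all multiples of $n$ shifted by $r$, and it returns to $r$ exactly when $kn \equiv 0 \pmod m$, i.e. when $k$ is a multiple of $|H|$.

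Next I would compute $|H|$. By a standard fact about cyclic groups, the order of the element $n$ in $\mathbb{Z}/m\mathbb{Z}$ is $m/\gcd(m,n) = m/d$. Therefore each coset $r + H$ has exactly $m/d$ elements, and consequently the number of distinct cosets is $m / (m/d) = d$. This shows that the $m$ residues split into exactly $d$ disjoint repeating sequences, each of period $m/d$. The claim about the number of loops then follows immediately: each residue class $r \in \{0, 1, \dots, d-1\}$ gives a representative of a distinct coset (since $0, 1, \dots, d-1$ are pairwise incongruent modulo the subgroup $H$, as $H$ contains $d = \gcd(m,n)$ and hence consists precisely of the multiples of $d$ in $\mathbb{Z}/m\mathbb{Z}$), so the $d$ sequences $S^{(0)}, \dots, S^{(d-1)}$ are exactly the $d$ cosets, and they cover all $m$ levels without overlap. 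Hence the construction yields exactly $d = \gcd(m,n)$ disjoint loops.

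I would round out the argument with two small verifications: (i) that $S^{(r)}$ and $S^{(r')}$ are either identical (as cyclic sequences) or disjoint — this is just the coset property, since two cosets of a subgroup are equal or disjoint; and (ii) that the chosen representatives $r = 0, 1, \dots, d-1$ indeed land in distinct cosets, which follows because $r - r' \in H = d\,\mathbb{Z}/m\mathbb{Z}$ forces $d \mid (r - r')$, impossible for distinct $r, r'$ in $\{0, \dots, d-1\}$. The convention of writing $m$ for $0$ is purely cosmetic and does not affect the orbit count, so it can be mentioned in a single sentence.

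The main obstacle, though it is more expository than mathematical, is making sure the correspondence between the combinatorial ``loop'' in the Kolam diagram and the algebraic orbit is stated cleanly — in particular, that closing the sequence back to $a^{(r)}_0$ corresponds exactly to completing one loop, with no loop being traversed twice and no level left uncovered. Once that dictionary is fixed, the proof is a direct application of Lagrange's theorem together with the order formula for elements of a cyclic group, so no genuinely hard step remains.
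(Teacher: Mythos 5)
Your proposal is correct and is essentially the paper's argument in group-theoretic dress: your computation of the order of $n$ in $\mathbb{Z}/m\mathbb{Z}$ as $m/d$ matches the paper's explicit derivation that each sequence has period $m_1 = m/d$, and your observation that $H = \langle n\rangle$ consists of the multiples of $d$ (so that $0,1,\dots,d-1$ represent distinct cosets) is exactly the paper's step showing $a^{(r)}_k \equiv r \pmod{d}$. The only difference is that you invoke Lagrange's theorem and the order formula as known facts where the paper verifies them by direct divisibility manipulation, so no substantive gap or divergence exists.
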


\begin{proof}
Let $d=\gcd(m,n)$. 
Then we may write $m=d\,m_1$ and $n=d\,n_1$ where $\gcd(m_1,n_1)=1$.

\noindent Consider the first sequence corresponding to $r=0$:
\[
a^{(0)}_k \equiv (k n) \mod m = (k d n_1) \mod{(d m_1)}.
\]
Two terms $a^{(0)}_k$ and $a^{(0)}_{k'}$ are equal modulo $m$ if and only if 
\[
m \mid (k-k')n \quad \iff\quad d m_1 \mid (k-k')d n_1 
\quad \iff\quad m_1 \mid (k-k')n_1.
\]
Since $\gcd(m_1,n_1)=1$, this happens exactly when $m_1\mid (k-k')$. 
Therefore the sequence repeats after every $m_1$ steps, i.e.
\[
a^{(0)}_{k+m_1} \equiv a^{(0)}_k \mod m.
\]

Hence, the sequence $a^{(0)}_k$ contains exactly $m_1 = \dfrac{m}{d}$ distinct values before repeating. 
Similarly, generalizing this for any $r$, it can be shown that the sequence $a^{(r)}_k$ also contains exactly $\dfrac{m}{d}$ distinct elements before repetition.\\

\noindent Now observe that for any $r\in\{0,1,\dots,d-1\}$,
\[
a^{(r)}_k \equiv (k n + r) \mod m = (k d n_1 + r) \mod{d m_1}.
\]
\noindent
So we can write
\[
a^{(r)}_k = k d n_1 + r + t (d m_1)
\]
for some integer \(t\).\\

\noindent
Looking at the right-hand side, we note that \(k d n_1\) is divisible by \(d \;\Rightarrow\; k d n_1 \equiv 0 \pmod{d}\) and \(t (d m_1)\) is also divisible by \(d \;\Rightarrow\; t d m_1 \equiv 0 \pmod{d}\) 

\noindent
Therefore,
\[
a^{(r)}_k \equiv r \pmod{d}.
\]

\noindent Consequently, two sequences $a^{(r_1)}_k$ and $a^{(r_2)}_k$ can never have a common term unless $r_1\equiv r_2 \mod d$. 
Since we have taken $r=0,1,\dots,d-1$, the $d$ sequences are pairwise disjoint.

\noindent Each sequence contains exactly $\dfrac{m}{d}$ distinct elements, and all $d$ sequences together cover all $m$ integers $0,1,\dots,m-1$. 
Therefore, the total number of disjoint sequences (or loops) is exactly $d=\gcd(m,n)$.
 
\end{proof}

    


\begin{remark}
    If \( m \nmid n \) and \( \gcd(m,n) > 1 \), then modular loop sequences provide the foundation for constructing multi-loop kolam patterns.
\end{remark}
\begin{remark}
    If m divides n and $gcd (m, n)>1$ then the case reduces to degeneracy of the dot-arm structure.
\end{remark}

\section{Algorithm to Generate Modular Sequences and Draw Multiple Loops}

\begin{algorithm}[H]
\caption{Visualize Modular Sequences on Radial Arms}
\begin{algorithmic}[1]
\Require Integers $m, n$ such that $\gcd(m, n) > 1$ and $m \nmid n$
\Ensure Polar plots of closed modular loops for each residue $r \in \{0, 1, \dots, \gcd(m,n)-1\}$

\State $d \gets \gcd(m, n)$
\State Define arm angles $\theta_i = \dfrac{2\pi i}{n}$ for $i = 0$ to $n-1$
\For{each $\theta_i$}
    \State Draw radial arm from origin to unit radius at angle $\theta_i$
\EndFor

\For{each residue class $r = 0$ to $d - 1$}
    \State Initialize $visited \gets \emptyset$, \quad $coords \gets [\ ]$, \quad $k \gets 0$
    \While{true}
        \State $\text{dot} \gets (k \cdot n + r) \bmod m$
        \If{$\text{dot} = 0$}
            \State $\text{dot} \gets m$
        \EndIf
        \State $arm \gets k \bmod n$
        \If{$(arm, \text{dot}) \in visited$}
            \State \textbf{break}
        \EndIf
        \State Add $(arm, \text{dot})$ to $visited$
        \State $\theta \gets \theta_{arm}$,\quad $R \gets dot$
        \State Append $(\theta,\ R)$ to $coords$
        \State $k \gets k + 1$
    \EndWhile
    \State Append starting point $coords[0]$ to $coords$
    \State Plot each curve using a distinct color and label
\EndFor
\end{algorithmic}
\end{algorithm}

\subsection{ Generated sequences for various values of $m$ and $n$ }
We now list the sequences generated by the formula posed in Section 2.1 for various $m, n$ ($2 < m, n \leq 12$) such that $\gcd(m,n) > 1$ and $m \nmid n$.
\renewcommand{\arraystretch}{1.3}
\setlength{\tabcolsep}{12pt}
\begin{longtable}{|c|c|c|l|}
\hline
$m$ & $n$ & $r$ & First Cycle of Modular Sequence \\
\hline
\endfirsthead
\hline
$m$ & $n$ & $r$ & First Cycle of Modular Sequence \\
\hline
\endhead
4 & 6 & 0; 1 & $4 \rightarrow 2 \rightarrow 4$; $1 \rightarrow 3 \rightarrow 1$ \\
 & 10 & 0; 1 & $4 \rightarrow 2 \rightarrow 4$; $1 \rightarrow 3 \rightarrow 1$\\
\midrule
6 & 3 & 0; 1; 2 & $6 \rightarrow 3 \rightarrow 6$; $1 \rightarrow 4 \rightarrow 1$; $2 \rightarrow 5 \rightarrow 2$  \\
 & 4 & 0; 1 & $6 \rightarrow 4 \rightarrow 2 \rightarrow 6$; $1 \rightarrow 5 \rightarrow 3 \rightarrow 1$\\
 & 8 & 0; 1 & $6 \rightarrow 2 \rightarrow 4 \rightarrow 6$; $1 \rightarrow 3 \rightarrow 5 \rightarrow 1$\\
 & 9 & 0; 1; 2 & $6 \rightarrow 3 \rightarrow 6$; $1 \rightarrow 4 \rightarrow 1$; $2 \rightarrow 5 \rightarrow 2$\\
 & 10 & 0; 1 & $6 \rightarrow 4 \rightarrow 2 \rightarrow 6$; $1 \rightarrow 5 \rightarrow 3 \rightarrow 1$\\
\midrule
8 & 4 & 0; 1; 2; 3 & $8 \rightarrow 4 \rightarrow 8$; $1 \rightarrow 5 \rightarrow 1$; $2\rightarrow 6 \rightarrow 2$; $3 \rightarrow 7 \rightarrow 3$ \\
 & 6 & 0; 1 & $8 \rightarrow 6 \rightarrow 4 \rightarrow 2 \rightarrow 8$; $1 \rightarrow 7 \rightarrow 5 \rightarrow 3 \rightarrow 1$\\
& 10 & 0; 1 & $8 \rightarrow 2 \rightarrow 4 \rightarrow 6 \rightarrow 8$; $1 \rightarrow 3 \rightarrow 5 \rightarrow 7 \rightarrow 1$ \\
& 12 & 0; 1; 2; 3 & $8 \rightarrow 4 \rightarrow 8$; $1 \rightarrow 5 \rightarrow 1$; $2 \rightarrow 6 \rightarrow 2$; $3 \rightarrow 7 \rightarrow 3$ \\
\midrule
9 & 3 & 0; 1; 2 & $9 \rightarrow 3 \rightarrow 6 \rightarrow 9$; $1 \rightarrow 4 \rightarrow 7 \rightarrow 1$; $2 \rightarrow 5 \rightarrow 8 \rightarrow 2$ \\
& 6 & 0; 1; 2 & $9 \rightarrow 6 \rightarrow 3 \rightarrow 9$; $1 \rightarrow 7 \rightarrow 4 \rightarrow 1$; $2 \rightarrow 8 \rightarrow 5 \rightarrow 2$ \\
& 12 & 0; 1; 2 & $9 \rightarrow 3 \rightarrow 6 \rightarrow 9$; $1 \rightarrow 4 \rightarrow 7 \rightarrow 1$; $2 \rightarrow 5 \rightarrow 8 \rightarrow 2$ \\
\midrule
10 & 4 & 0; 1 & $10 \rightarrow 4 \rightarrow 8 \rightarrow 2 \rightarrow 6 \rightarrow 10$; $1 \rightarrow 5 \rightarrow 9 \rightarrow 3 \rightarrow 7 \rightarrow 1$ \\
& 5 & 0; 1; 2; 3; 4 & $10 \rightarrow 5 \rightarrow 10$; $1 \rightarrow 6 \rightarrow 1$; $2 \rightarrow 7 \rightarrow 2$; $3 \rightarrow 8 \rightarrow 3$; $4 \rightarrow 9 \rightarrow 4$ \\
& 6 & 0; 1 & $10 \rightarrow 6 \rightarrow 2 \rightarrow 8 \rightarrow 4 \rightarrow 10$; $1 \rightarrow 7 \rightarrow 3 \rightarrow 9 \rightarrow 5 \rightarrow 1$ \\
& 8 & 0; 1 & $10 \rightarrow 8 \rightarrow 6 \rightarrow 4 \rightarrow 2 \rightarrow 10$; $1 \rightarrow 9 \rightarrow 7 \rightarrow 5 \rightarrow 3 \rightarrow 1$ \\
& 12 & 0; 1 & $10 \rightarrow 2 \rightarrow 4 \rightarrow 6 \rightarrow 8 \rightarrow 10$; $1 \rightarrow 3 \rightarrow 5 \rightarrow 7 \rightarrow 9 \rightarrow 1$ \\
\midrule
12 & 3 & 0; 1 & $12 \rightarrow 3 \rightarrow 6 \rightarrow 9 \rightarrow 12$; $1 \rightarrow 4 \rightarrow 7 \rightarrow 10 \rightarrow 1$;  \\
 &  & 2 & $2 \rightarrow 5 \rightarrow 8 \rightarrow 11 \rightarrow 2$\\
& 4 & 0; 1; 2 & $12 \rightarrow 4 \rightarrow 8 \rightarrow 12$; $1 \rightarrow 5 \rightarrow 9 \rightarrow 1$; $2 \rightarrow 6 \rightarrow 10 \rightarrow 2$;  \\
&  & 3 &  $3 \rightarrow 7 \rightarrow 11 \rightarrow 3$ \\
& 6 & 0; 1; 2; 3 & $12 \rightarrow 6 \rightarrow 12$; $1 \rightarrow 7 \rightarrow 1$; $2 \rightarrow 8 \rightarrow 2$; $3 \rightarrow 9 \rightarrow 3$; \\
&  & 4; 5 & $4 \rightarrow 10 \rightarrow 4$; $5 \rightarrow 11 \rightarrow 5$ \\
& 8 & 0; 1; & $12 \rightarrow 8 \rightarrow 4 \rightarrow 12$; $1 \rightarrow 9 \rightarrow 5 \rightarrow 1$; \\
&  & 2; 3 & $2 \rightarrow 10 \rightarrow 6 \rightarrow 2$; $3 \rightarrow 11 \rightarrow 7 \rightarrow 3$ \\
& 9 & 0; 1 & $12 \rightarrow 9 \rightarrow 6 \rightarrow 3 \rightarrow 12$; $1 \rightarrow 10 \rightarrow 7 \rightarrow 4 \rightarrow 1$; \\
 &  & 2 & $2 \rightarrow 11 \rightarrow 8 \rightarrow 5 \rightarrow 2$ \\
& 10 & 0 & $12 \rightarrow 10 \rightarrow 8 \rightarrow 6 \rightarrow 4 \rightarrow 2 \rightarrow 12$; \\
 &  & 1 & $1 \rightarrow 11 \rightarrow 9 \rightarrow 7 \rightarrow 5 \rightarrow 3 \rightarrow 1$ \\
\midrule
\end{longtable}

\noindent
The key insight is to examine how modular sequences traverse both arms and dot levels. Each step in the modular sequence \( a_k^{(r)} = (kn + r) \mod m \) moves across arms while advancing modulo \( m \). To ensure full coverage of all \( m \times n \) dot-arms, we need these sequences to form loops that:
\begin{enumerate}
    \item Touch every dot level (modulo \( m \)), and
    \item Visit every arm exactly once per loop.
\end{enumerate}
    
This leads to a precise criterion for full coverage:

\begin{proposition}
The sequences visit all \( m \times n \) dot--arm positions exactly once if and only if
\[
\gcd\!\left(\frac{m}{\gcd(m,n)},\,n\right)=1.
\]
\end{proposition}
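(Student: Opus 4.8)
The plan is to reduce the statement to a single counting identity by pinning down the length of each loop. Write $d=\gcd(m,n)$ and $m_1=m/d$. In the algorithm the loop associated with a residue $r\in\{0,\dots,d-1\}$ visits, at step $k$, the dot--arm position
\[
p^{(r)}_k=\bigl(k\bmod n,\ a^{(r)}_k\bigr),\qquad a^{(r)}_k\equiv (kn+r)\bmod m .
\]
First I would record precisely when two such positions coincide: $p^{(r)}_k=p^{(r)}_{k'}$ if and only if $n\mid(k-k')$ (the arm index returns) and $m\mid(k-k')n$ (the dot level returns). Using $\{\delta:m\mid\delta n\}=(m/\gcd(m,n))\mathbb{Z}=m_1\mathbb{Z}$, the set of admissible gaps is $n\mathbb{Z}\cap m_1\mathbb{Z}=\operatorname{lcm}(n,m_1)\,\mathbb{Z}$. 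Hence, independently of $r$, the position sequence is purely periodic with minimal period
\[
L:=\operatorname{lcm}(n,m_1)=\frac{n\,m_1}{\gcd(n,m_1)},
\]
so the ``break on revisit'' step stops after exactly $L$ steps and each loop is a simple cycle through $L$ distinct dot--arm positions.

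Next I would show the $d$ loops are pairwise disjoint. By the preceding Proposition every term satisfies $a^{(r)}_k\equiv r\pmod d$, so loops attached to distinct residues $r\in\{0,\dots,d-1\}$ occupy disjoint sets of dot levels, hence disjoint sets of positions. Consequently the $d$ loops together visit exactly $dL$ positions, each at most once, and the construction visits all $mn$ dot--arm positions exactly once if and only if $dL=mn$.

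It then remains to translate $dL=mn$ into the stated gcd condition. Since $mn=d\,m_1\,n$, we have $dL=mn\iff L=m_1 n\iff \operatorname{lcm}(n,m_1)=m_1 n\iff \gcd(n,m_1)=1$, i.e. $\gcd\!\bigl(m/\gcd(m,n),\,n\bigr)=1$. Conversely, if $\gcd(n,m_1)>1$ then $L<m_1 n$, so $dL<mn$ and at least one dot--arm position is never reached; this gives the biconditional. (As a consistency check, the degenerate reading $\gcd(m,n)=1$ gives $m_1=m$, $L=\operatorname{lcm}(n,m)=mn$, a single loop covering everything, recovering the classical single-circuit case.)

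The only delicate point I anticipate is the period computation in the first paragraph: one must argue cleanly that the gap set is the subgroup $n\mathbb{Z}\cap m_1\mathbb{Z}$ — in particular that $m\mid\delta n$ is equivalent to $m_1\mid\delta$, which uses $\gcd(m_1,n/d)=1$ — and then that this subgroup equals $\operatorname{lcm}(n,m_1)\mathbb{Z}$. Everything afterwards is elementary; the main conceptual care is simply remembering that ``visits every position exactly once'' is the conjunction of two facts: simplicity of each individual loop (immediate from pure periodicity) and surjectivity of their union onto the $mn$ positions (the counting identity $dL=mn$).
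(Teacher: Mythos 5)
Your proof is correct and follows essentially the same route as the paper: identify the period of each loop as $L=\operatorname{lcm}(m/d,\,n)$, multiply by the number $d$ of pairwise disjoint loops, and compare $dL$ with $mn$. The only difference is that you explicitly justify two points the paper leaves implicit — that $L$ is the \emph{minimal} period (via the gap-subgroup argument $n\mathbb{Z}\cap m_1\mathbb{Z}=\operatorname{lcm}(n,m_1)\mathbb{Z}$) and that the disjointness of the $d$ loops, established in the preceding proposition, is what licenses the count $N=dL$ — which makes your write-up slightly more complete.
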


\begin{proof}
Let \( d=\gcd(m,n) \) and \( \ell=\dfrac{m}{d} \).  
For each residue \( r=0,1,\dots,d-1 \), define the sequence of dot-arm pairs
\[
P^{(r)}=\Big\{\,(\text{arm},\text{dot})=(k\bmod n,\; (kn+r)\bmod m)\;:\;k=0,1,2,\dots\,\Big\}.
\]
Each sequence \(P^{(r)}\) repeats its dot pattern after \(\ell\) steps
and its arm pattern after \(n\) steps.  
Hence the pair \((\text{arm},\text{dot})\) repeats with period
\[
L=\operatorname{lcm}(\ell,n).
\]
Since there are \(d\) such residue classes, the total number of distinct
dot--arm positions visited by all sequences is
\[
N=d\cdot L
   =d\cdot\operatorname{lcm}\!\left(\frac{m}{d},n\right)
   =\frac{mn}{\gcd(\frac{m}{d},\,n)}.
\]
Therefore, the sequences cover all \(m\times n\) positions exactly once
if and only if \(\gcd(\frac{m}{d},n)=1\), which is the required condition.
\end{proof}

\begin{example}
    Let us consider an example for the case where \( m = 6 \) and \( n = 4 \), so \( \gcd(6,4) = 2 \). We divide the dot system into two sequences of length \( \ell = \frac{m}{d} = 3 \). We have two sequences with dots \( \{1,3,5\} \)  for one and another sequence with dots \( \{2,4,6\} \).\\
    Considering red dots and blue dots separately with given four arms, it boils down to a problem of two $3\times4$ dot-arm structures. Since \( \gcd(3,4) = 1 \), each loop visits all arms exactly once — forming a complete Eulerian cycle. Thus, we obtain exactly \( d = 2 \) disjoint closed loops.
\begin{center}
\textbf{6 Dot(Red: 1,3,5; Blue: 2,4,6)-4 arm Structure }
\end{center}
\begin{center}
\begin{tikzpicture}[scale=2, every node/.style={font=\scriptsize}]
  \foreach \i in {0,90,180,270} {
    \draw[gray!60, thick, ->] (0,0) -- (\i:1.5);
  }

  \foreach \i/\name in {0/Arm 1, 90/Arm 2, 180/Arm 3, 270/Arm 4} {
    \node at (\i:1.85) {\name};
  }

  \foreach \i in {1,...,6} {
    \pgfmathsetmacro{\r}{0.3 + 0.2*\i}
    \pgfmathtruncatemacro{\coltest}{mod(\i,2)}
    \ifnum\coltest=1
      \def\col{red}
    \else
      \def\col{blue}
    \fi

    \foreach \j/\angle in {1/0, 2/90, 3/180, 4/270} {
      \coordinate (A\j-\i) at (\angle:\r);
      \fill[\col] (A\j-\i) circle (1.5pt);
      \node[right] at (A\j-\i) {\i};
    }
  }
\end{tikzpicture}
\end{center}

\begin{minipage}[t]{0.48\textwidth}
\centering
\textbf{3 Dot(Red)-4 arm Structure}

\begin{tikzpicture}[scale=2, every node/.style={font=\scriptsize}]
  \foreach \i in {0,90,180,270} {
    \draw[gray!30, thick, ->] (0,0) -- (\i:1.5);
  }
  \foreach \i/\name in {0/Arm 1, 90/Arm 2, 180/Arm 3, 270/Arm 4} {
    \node at (\i:1.7) {\name};
  }

  \foreach \i in {1,3,5} {
    \pgfmathsetmacro{\r}{0.3 + 0.2*\i}
    \foreach \j/\angle in {1/0, 2/90, 3/180, 4/270} {
      \coordinate (R\j-\i) at (\angle:\r);
      \fill[red] (R\j-\i) circle (1.5pt);
      \node[right] at (R\j-\i) {\i};
    }
  }
\end{tikzpicture}
\end{minipage}
\hfill
\begin{minipage}[t]{0.48\textwidth}
\centering
\textbf{3 Dot(Blue)-4 arm Structure}

\begin{tikzpicture}[scale=2, every node/.style={font=\scriptsize}]
  \foreach \i in {0,90,180,270} {
    \draw[gray!30, thick, ->] (0,0) -- (\i:1.5);
  }
  \foreach \i/\name in {0/Arm 1, 90/Arm 2, 180/Arm 3, 270/Arm 4} {
    \node at (\i:1.85) {\name};
  }

  \foreach \i in {2,4,6} {
    \pgfmathsetmacro{\r}{0.3 + 0.2*\i}
    \foreach \j/\angle in {1/0, 2/90, 3/180, 4/270} {
      \coordinate (B\j-\i) at (\angle:\r);
      \fill[blue] (B\j-\i) circle (1.5pt);
      \node[right] at (B\j-\i) {\i};
    }
  }
\end{tikzpicture}
\end{minipage}

 So for \( \gcd(m,n) = d > 1 \), the dot structure naturally decomposes into \( d \) independent loops. If \( \frac{m}{d} \) and \( n \) are co-prime, each loop forms a complete Eulerian cycle over the arms. Thus, these modular sequences efficiently exhaust all dot arms in distinct and symmetric components.
\end{example}

\section{Multiple loop visualization}
\subsection{Exhaustive Coverage of Dot-Arm Structures}
In this section, we represents multiple loop kolam designs for various dots (m) and arms (n) such that 
$\gcd\left(\frac{m}{\gcd(m,n)}, n\right) = 1$. The corresponding sequences are given in the table of Section 3.1. We observe that under this condition dots-arms are exhausted for all the diagrams.

\begin{figure}[H]
\centering
\begin{subfigure}[t]{0.45\textwidth}
\includegraphics[width=\linewidth]{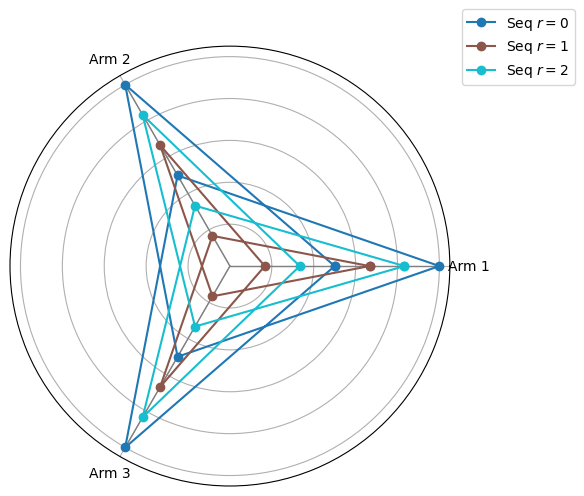}
\caption*{$(m,n) = (6,3)$}
\end{subfigure}
\hfill
\begin{subfigure}[t]{0.45\textwidth}
\includegraphics[width=\linewidth]{m6n4.png}
\caption*{$(m,n) = (6,4)$}
\end{subfigure}
\end{figure}

\begin{figure}[H]
\centering
\begin{subfigure}[t]{0.45\textwidth}
\includegraphics[width=\linewidth]{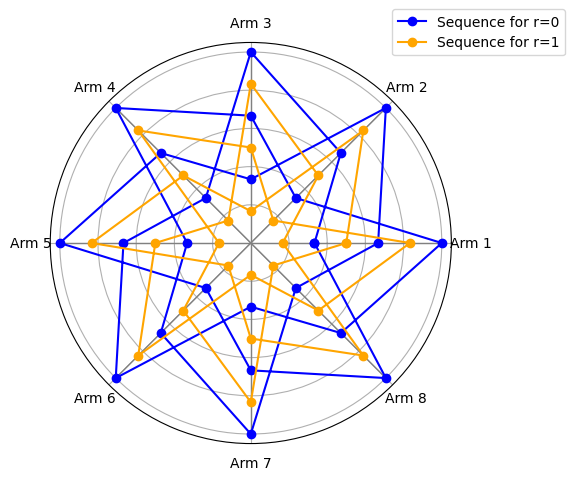}
\caption*{$(m,n) = (6,8)$}
\end{subfigure}
\hfill
\begin{subfigure}[t]{0.45\textwidth}
\includegraphics[width=\linewidth]{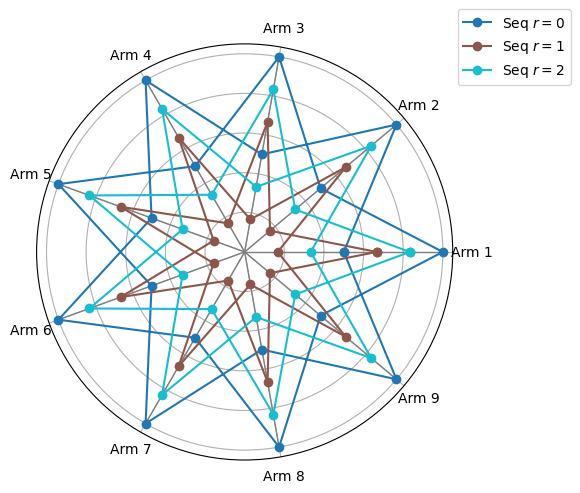}
\caption*{$(m,n) = (6,9)$}
\end{subfigure}
\end{figure}

\begin{figure}[H]
\centering
\begin{subfigure}[t]{0.45\textwidth}
\includegraphics[width=\linewidth]{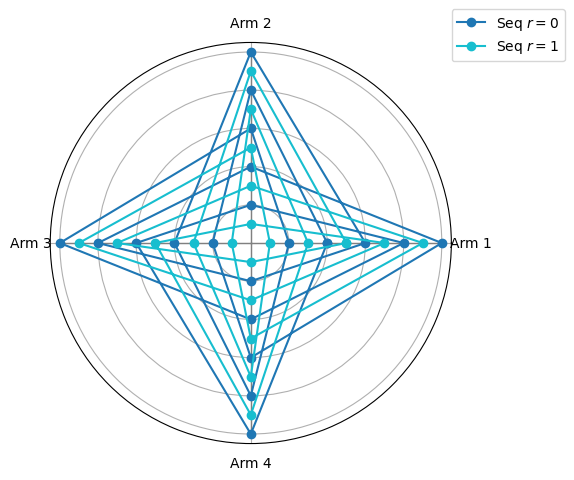}
\caption*{$(m,n) = (10,4)$}
\end{subfigure}
\hfill
\begin{subfigure}[t]{0.45\textwidth}
\includegraphics[width=\linewidth]{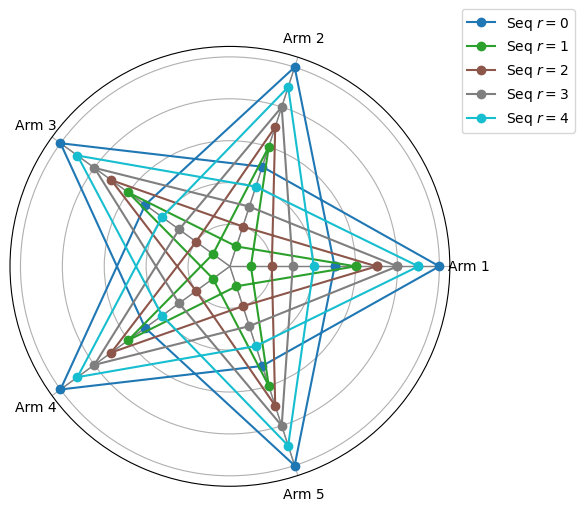}
\caption*{$(m,n) = (10,5)$}
\end{subfigure}
\end{figure}

\begin{figure}[H]
\centering
\begin{subfigure}[t]{0.45\textwidth}
\includegraphics[width=\linewidth]{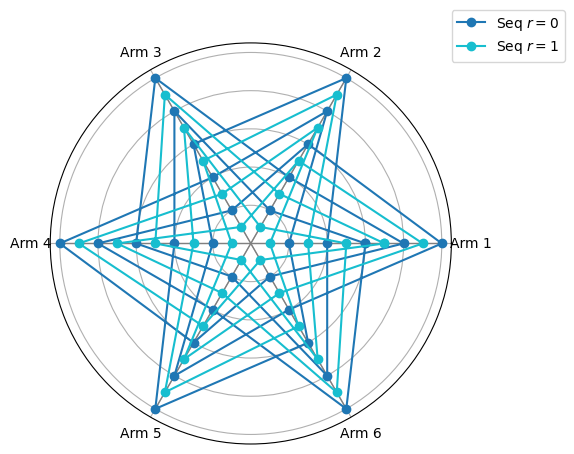}
\caption*{$(m,n) = (10,6)$}
\end{subfigure}
\hfill
\begin{subfigure}[t]{0.45\textwidth}
\includegraphics[width=\linewidth]{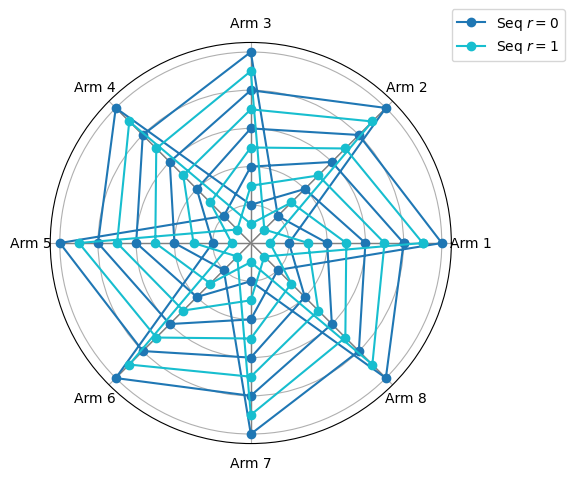}
\caption*{$(m,n) = (10,8)$}
\end{subfigure}
\end{figure}

\begin{figure}[H]
\centering
\begin{subfigure}[t]{0.45\textwidth}
\includegraphics[width=\linewidth]{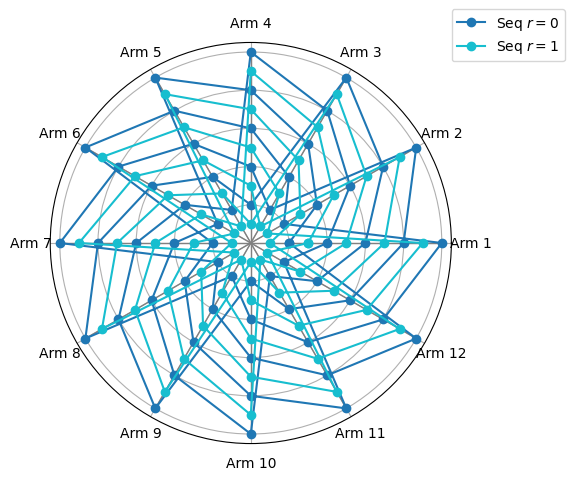}
\caption*{$(m,n) = (10,12)$}
\end{subfigure}
\hfill
\begin{subfigure}[t]{0.45\textwidth}
\includegraphics[width=\linewidth]{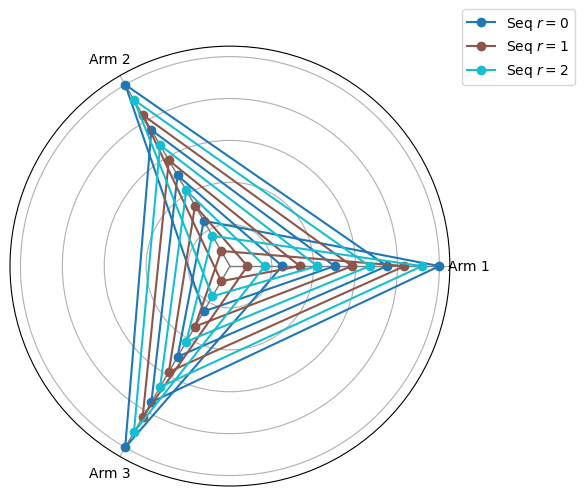}
\caption*{$(m,n) = (12,3)$}
\end{subfigure}
\end{figure}

\begin{figure}[H]
\centering
\begin{subfigure}[t]{0.45\textwidth}
\includegraphics[width=\linewidth]{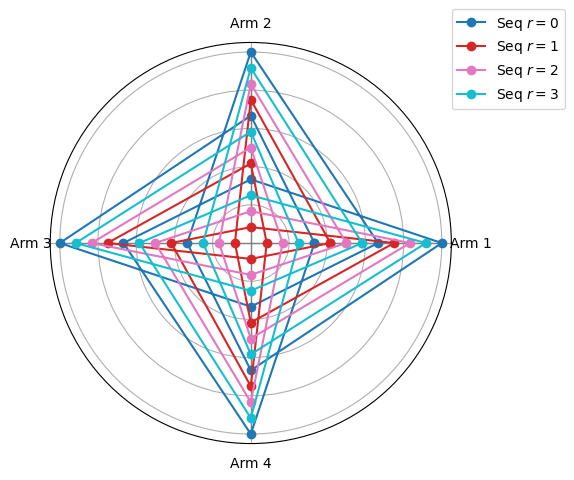}
\caption*{$(m,n) = (12,4)$}
\end{subfigure}
\hfill
\begin{subfigure}[t]{0.45\textwidth}
\includegraphics[width=\linewidth]{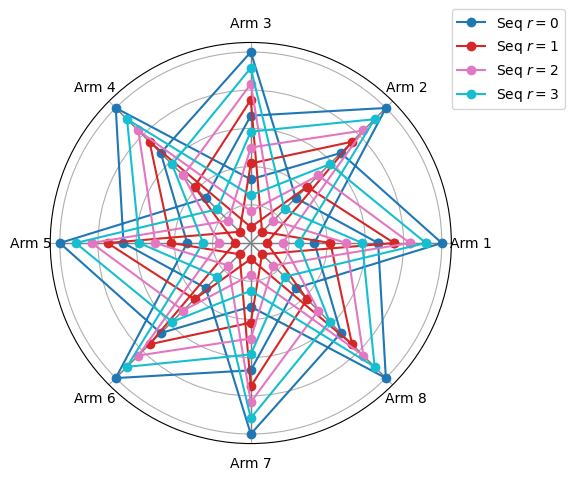}
\caption*{$(m,n) = (12,8)$}
\end{subfigure}
\end{figure}

\begin{figure}[H]
\centering
\begin{subfigure}[t]{0.45\textwidth}
\centering
\includegraphics[width=\linewidth]{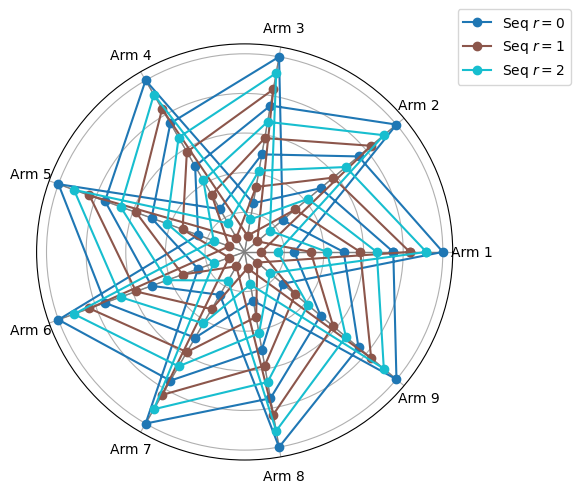}
\caption*{$(m,n) = (12,9)$}
\end{subfigure}
\end{figure}

\newpage 
\subsection{Partial (Non-Exhaustive) Coverage of Dot–Arm Structures}
Here we consider the case when $\gcd\left(\frac{m}{\gcd(m,n)}, n\right) \neq 1$ so that we obtain all dots arms which are not exhausted.


\begin{figure}[H]
\centering
\begin{subfigure}[t]{0.45\textwidth}
\includegraphics[width=\linewidth]{m4n6.png}
\caption*{$(m,n) = (4,6)$}
\end{subfigure}
\hfill
\begin{subfigure}[t]{0.45\textwidth}
\includegraphics[width=\linewidth]{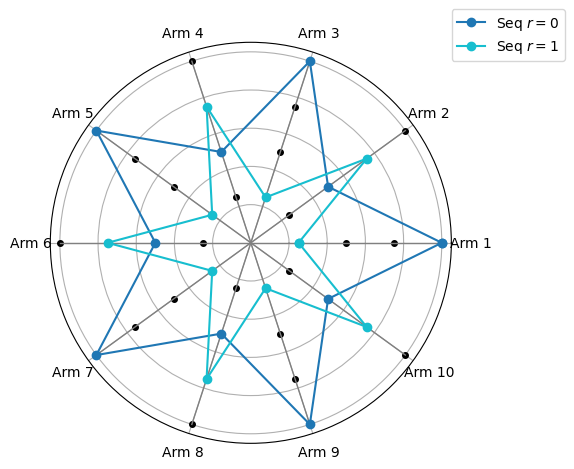}
\caption*{$(m,n) = (4,10)$}
\end{subfigure}
\end{figure}

\begin{figure}[H]
\centering
\begin{subfigure}[t]{0.45\textwidth}
\includegraphics[width=\linewidth]{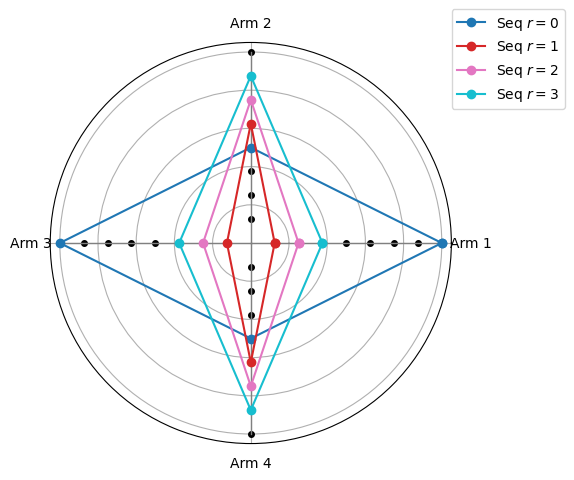}
\caption*{$(m,n) = (8,4)$}
\end{subfigure}
\hfill
\begin{subfigure}[t]{0.45\textwidth}
\includegraphics[width=\linewidth]{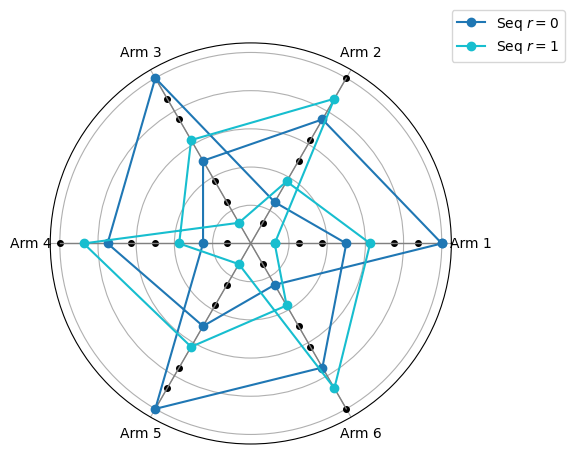}
\caption*{$(m,n) = (8,6)$}
\end{subfigure}
\end{figure}

\begin{figure}[H]
\centering
\begin{subfigure}[t]{0.45\textwidth}
\includegraphics[width=\linewidth]{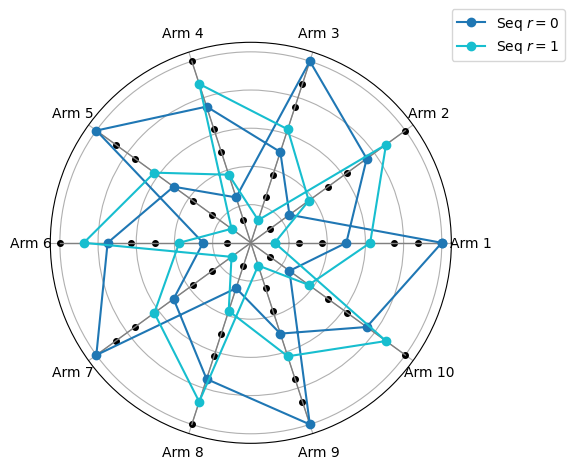}
\caption*{$(m,n) = (8,10)$}
\end{subfigure}
\hfill
\begin{subfigure}[t]{0.45\textwidth}
\includegraphics[width=\linewidth]{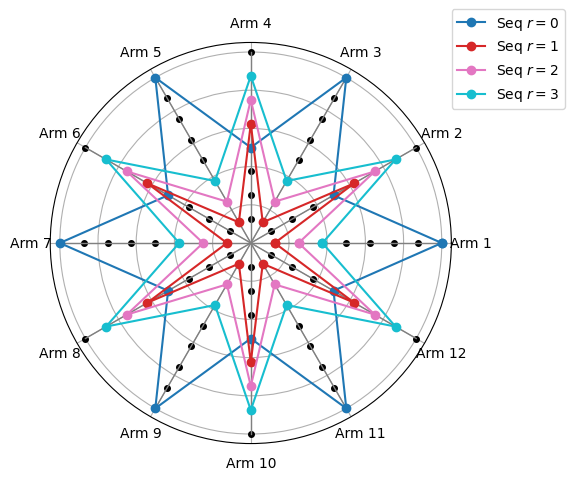}
\caption*{$(m,n) = (8,12)$}
\end{subfigure}
\end{figure}

\begin{figure}[H]
\centering
\begin{subfigure}[t]{0.45\textwidth}
\includegraphics[width=\linewidth]{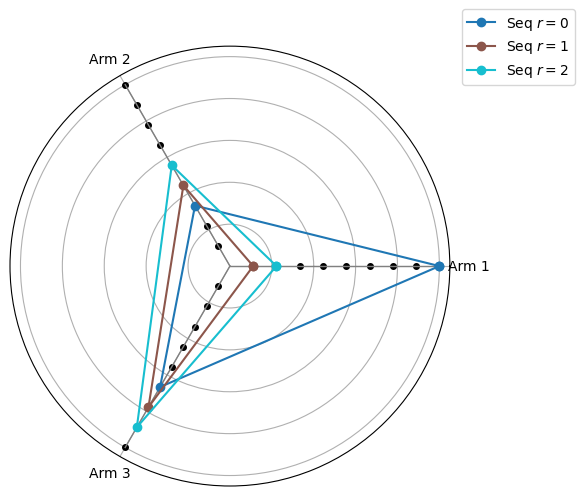}
\caption*{$(m,n) = (9,3)$}
\end{subfigure}
\hfill
\begin{subfigure}[t]{0.45\textwidth}
\includegraphics[width=\linewidth]{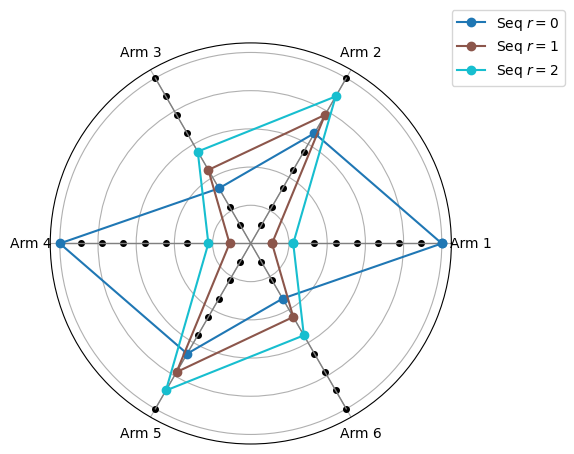}
\caption*{$(m,n) = (9,6)$}
\end{subfigure}
\end{figure}

\begin{figure}[H]
\centering
\begin{subfigure}[t]{0.45\textwidth}
\includegraphics[width=\linewidth]{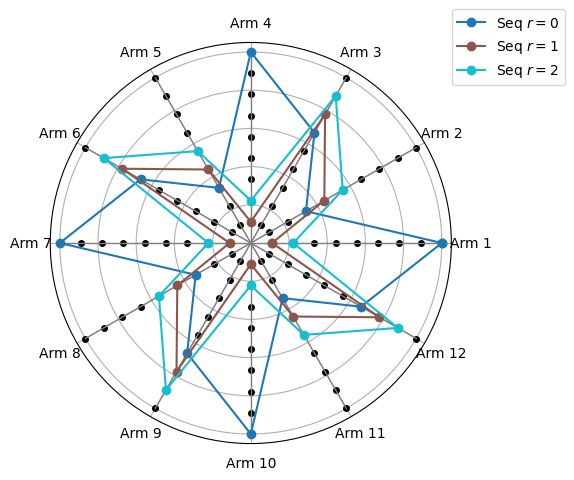}
\caption*{$(m,n) = (9,12)$}
\end{subfigure}
\hfill
\begin{subfigure}[t]{0.45\textwidth}
\includegraphics[width=\linewidth]{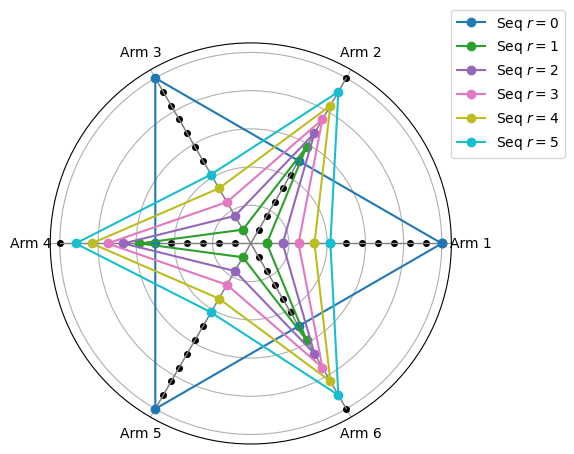}
\caption*{$(m,n) = (12,6)$}
\end{subfigure}
\end{figure}

\begin{figure}[H]
\centering
\begin{subfigure}[t]{0.45\textwidth}
\centering
\includegraphics[width=\linewidth]{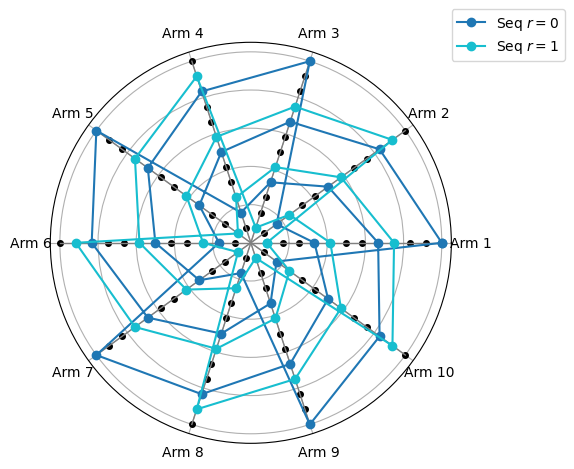}
\caption*{$(m,n) = (12,10)$}
\end{subfigure}
\end{figure}

\section{Conclusion}

\noindent
This study demonstrates how modular arithmetic sequences of the form 
\( a_k^{(r)} = (kn + r) \bmod m \) generate closed-loop structures on radial dot-arm diagrams. 
When \( \gcd(m, n) = 1 \), a single continuous loop traverses all dot-arm positions; however, 
for \( \gcd(m, n) = d > 1 \) and \( m \nmid n \), the system decomposes into \( d \) disjoint loops, 
each corresponding to a residue class \( r \in \{0, 1, \dots, d - 1\} \). 
The completeness of coverage across all dot--arm positions depends on whether 
\( \gcd\!\left(\tfrac{m}{\gcd(m, n)}, n\right) = 1 \). 
Thus, modular sequences provide a simple yet powerful framework for generating symmetric, 
aesthetically coherent Kolam patterns through purely arithmetic rules.

\end{document}